\numberwithin{equation}{section}
\numberwithin{figure}{section}
\theoremstyle{plain}
\newtheorem{thm}{\protect\theoremname}
\theoremstyle{definition}
\newtheorem{defn}[thm]{\protect\definitionname}
\theoremstyle{plain}
\newtheorem{lem}[thm]{\protect\lemmaname}
\theoremstyle{plain}
\newtheorem{prop}[thm]{\protect\propositionname}
\providecommand{\definitionname}{Definition}
\providecommand{\lemmaname}{Lemma}
\providecommand{\propositionname}{Proposition}
\providecommand{\theoremname}{Theorem}
\begin{document}
\title[Orientation-preserving and orientation-reversing mappings]{Orientation-preserving and orientation-reversing mappings: a new description}
\author{Peter M. Higgins and Alexei Vernitski}
\begin{abstract}
We characterise the respective semigroups of mappings that preserve,
or that preserve or reverse orientation of a finite cycle, in terms
of their actions on oriented triples and oriented quadruples. This
leads to a proof that the latter semigroup coincides with the semigroup
of all mappings that preserve intersections of chords on the corresponding
circle.
\end{abstract}

\maketitle

\section{Orientation-preserving and orientation-reversing mappings on a cycle }

This section presents definitions and some known results; it is based
mainly on \cite{CH1999} and also on \cite{M1998}. Let $[n]$ denote
the set $\{0,1,\cdots,n-1\}$. Consider a sequence $S=(a_{0},a_{1},\cdots,a_{t-1})$
drawn from $[n]$. A \emph{cyclic variant} of $S$ is a sequence $(a_{i+1},a_{i+2},\cdots,a_{i})$,
where $0\leq i<t$, and subscripts are taken modulo $t$. We say $S$
is \emph{cyclic }if there is at most one subscript $i$ such that
$a_{i}>a_{i+1}$ (subscripts taken modulo $t$). Equivalently, $S$
is cyclic if and only if at least one of its cyclic variants is non-decreasing
$a_{i+1}\leq a_{i+2}\leq\cdots\leq a_{i}$. We say $S$ is \emph{anti-cyclic
}if there is at most one subscript $i$ such that $a_{i}<a_{i+1}$.
Equivalently, $S$ is anti-cyclic if and only if at least one of its
cyclic variants is non-increasing $a_{i+1}\ge a_{i+2}\ge\cdots\ge a_{i}$.
We shall say that $S$ is \emph{oriented }if $S$ is cyclic or $S$
is anti-cyclic. Orientation, that is, being cyclic or anti-cyclic,
is a property inherited by subsequences of an oriented sequence. We
say that $S$ is \emph{uniquely oriented }if $S$ is cyclic and not
anti-cyclic or anti-cyclic and not cyclic. Let $|S|$ denote the number
of distinct elements of $S$, that is, $|S|=|\{a_{0},\dots,a_{t-1}\}|$.
It is easy to see that if $|S|\le2$ then $S$ is either both cyclic
and anti-cyclic (for example, $S=(0,1)$) or neither (for example,
$S=(0,1,0,1)$). As we will show below, non-trivial and useful examples
of oriented sequences are those with $|S|=3$ and $|S|=4$.

We introduce an equivalence relation $\sim$ on the set of uniquely
oriented sequences whereby $S\sim T$ if $S$ and $T$ have the same
orientation. For example, if $S'$ is a cyclic variant of $S$, then
$S\sim S'$.

We write $\mathcal{T}_{n}$ for the full transformation semigroup
on $[n]$. For $\alpha\in\mathcal{T}_{n}$ and a sequence $S=(a_{0},a_{1},\cdots,a_{t-1})$
with entries drawn from $[n]$ we shall write $S\alpha$ for the sequence
$(a_{0}\alpha,a_{1}\alpha,\cdots,a_{t-1}\alpha)$. 
\begin{defn}
A mapping $\alpha\in\mathcal{T}_{n}$ is \emph{orientation-preserving}
(resp. \emph{orientation-reversing}) if the sequence $(0,1,\cdots,n-1)\alpha$
is cyclic (resp. anti-cyclic); these names are justified by Lemma
\ref{lem:OP-via-oriented}. The collection of all orientation-preserving
(resp. orientation-reversing) mappings in $\mathcal{T}_{n}$ is denoted
by $\mathcal{OP}_{n}$ (resp. $\mathcal{OR}_{n}$), while $\mathcal{P}_{n}$
is defined as $\mathcal{P}_{n}=\mathcal{OP}_{n}\cup\mathcal{OR}_{n}$. 
\end{defn}

If $S$ is a sequence, we denote the reversed sequence by $S^{R}$.
\begin{lem}
\label{lem:OP-via-oriented}Let $\alpha\in\mathcal{OP}_{n}$ (resp.
$\alpha\in\mathcal{OR}_{n}$) and let $S$ be an oriented sequence
such that $|S\alpha|\ge3$. Then $S\alpha\sim S$ (\textup{resp. $\ensuremath{S\alpha\sim S^{R}}$}).
\end{lem}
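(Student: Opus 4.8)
The plan is to reduce the statement, through a chain of harmless simplifications of $S$, to the defining property that $(0,1,\dots,n-1)\alpha$ is cyclic (when $\alpha\in\mathcal{OP}_n$) or anti-cyclic (when $\alpha\in\mathcal{OR}_n$). First I would collect the bookkeeping. Distinct entries of $S\alpha$ are $\alpha$-images of distinct entries of $S$, so $|S|\ge|S\alpha|\ge3$; and an oriented sequence with at least three distinct entries is uniquely oriented, since if $S$ were both cyclic and anti-cyclic then a non-decreasing cyclic variant of it would be anti-cyclic and hence have at most one ascent, forcing $|S|\le2$. Thus $S$ is uniquely oriented, and once $S\alpha$ is shown to be oriented it too is uniquely oriented (again $|S\alpha|\ge3$), so the relations $S\alpha\sim S$ and $S\alpha\sim S^{R}$ are meaningful. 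Two further reductions: (i) if $S'$ is a cyclic variant of $S$ then $S'\alpha$ is the corresponding cyclic variant of $S\alpha$, and a sequence and its cyclic variant have the same number of cyclic descents, hence the same orientation type and the same $\sim$-class, so I may replace $S$ by any cyclic variant; (ii) deleting an entry that equals its predecessor changes neither the cyclic-descent count of a sequence nor that of its $\alpha$-image (after the same deletion), so I may collapse runs of equal consecutive entries.

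Assume first $\alpha\in\mathcal{OP}_n$ and $S$ cyclic. Passing to a non-decreasing cyclic variant and then collapsing equal consecutive entries, I reduce to $S=(v_{1},\dots,v_{k})$, where $v_{1}<\dots<v_{k}$ are the distinct entries of the original sequence and $k=|S|\ge3$. Now $(v_{1},\dots,v_{k})$ is exactly the subsequence of $(0,1,\dots,n-1)$ occupying positions $v_{1}<\dots<v_{k}$, so $S\alpha=(v_{1}\alpha,\dots,v_{k}\alpha)$ is the subsequence of $(0,1,\dots,n-1)\alpha$ at those same positions. Since $\alpha\in\mathcal{OP}_n$ the sequence $(0,1,\dots,n-1)\alpha$ is cyclic, and — this is the key fact — a subsequence of a cyclic sequence is cyclic. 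Hence $S\alpha$ is cyclic; undoing the two reductions (neither of which affects cyclicity) the original $S\alpha$ is cyclic, so it is uniquely oriented cyclic, as is $S$, and therefore $S\alpha\sim S$.

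The remaining cases reduce to this one. If $\alpha\in\mathcal{OP}_n$ and $S$ is anti-cyclic, then $S^{R}$ is cyclic (reversal interchanges ascents with descents, hence cyclic with anti-cyclic) and $(S^{R})\alpha=(S\alpha)^{R}$ with $|(S\alpha)^{R}|\ge3$; the case already done gives $(S\alpha)^{R}\sim S^{R}$, so $S\alpha$ is anti-cyclic, as is $S$, whence $S\alpha\sim S$. If instead $\alpha\in\mathcal{OR}_n$, I would run the identical argument with ``anti-cyclic'' in place of ``cyclic'' throughout, using that $(0,1,\dots,n-1)\alpha$ is now anti-cyclic and that a subsequence of an anti-cyclic sequence is anti-cyclic; the conclusion is that $S\alpha$ has the orientation opposite to that of $S$, i.e.\ the orientation of $S^{R}$, so $S\alpha\sim S^{R}$.

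The one nonobvious ingredient, and the main obstacle, is the claim that a subsequence of a cyclic sequence is again cyclic — slightly sharper than the already-noted fact that orientation is inherited by subsequences, because we must rule out that the subsequence is anti-cyclic but not cyclic. I would prove it by counting cyclic descents: if $C=(c_{0},\dots,c_{s-1})$ is cyclic and $D$ is its subsequence at positions $p_{1}<\dots<p_{m}$, then each cyclic descent of $D$ between consecutive chosen positions (including the one bridging $p_{m}$ to $p_{1}$ around the wrap-around) forces at least one genuine cyclic descent of $C$ inside the corresponding half-open arc of positions, and these $m$ arcs are pairwise disjoint; since $C$ has at most one cyclic descent, so does $D$. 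Accounting for the wrap-around arc correctly is the only delicate point, and the anti-cyclic version follows by reversing all the inequalities.
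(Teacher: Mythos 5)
Your proof is correct. Note that the paper itself gives no proof of this lemma: it is presented among the ``known results'' attributed to the cited literature, and the only related remark in the text is the unproved assertion that orientation is inherited by subsequences. So there is nothing to compare your argument against, but it does the job. Your reduction is the natural one: normalising $S$ by a cyclic shift and by collapsing repeated consecutive entries --- both operations commute with applying $\alpha$ entrywise and preserve the cyclic descent and ascent counts on both sides --- turns $S\alpha$ into a genuine subsequence of $(0,1,\dots,n-1)\alpha$, at which point the defining property of $\mathcal{OP}_{n}$ (resp. $\mathcal{OR}_{n}$) applies, and the remaining cases follow by reversal. The two points that genuinely need checking are both handled correctly: that an oriented sequence with at least three distinct values is uniquely oriented (so that $\sim$ is applicable and the case split into ``cyclic'' and ``anti-cyclic'' is exhaustive and exclusive), and that a subsequence of a cyclic sequence is cyclic and not merely oriented, which you establish by charging each cyclic descent of the subsequence to a descent of the ambient sequence in one of $m$ pairwise disjoint half-open arcs, correctly including the wrap-around arc from $p_{m}$ to $p_{1}$.
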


Since a sequence $S$ is cyclic (resp. anti-cyclic) if and only if
$S^{R}$ is anti-cyclic (resp. cyclic), it follows that we may equally
define $\mathcal{OP}_{n}$ as the set of all members of $\mathcal{T}_{n}$
that map anti-cyclic sequences to anti-cyclic sequences. 

That $\mathcal{OP}_{n}$ is a semigroup is now easily proved (Lemma
2.1 of \cite{CH1999}). However $\mathcal{OR}_{n}$ is not a semigroup:
$\mathcal{OR}_{n}\cdot\mathcal{OR}_{n}=\mathcal{OP}_{n}$, $\mathcal{OR}_{n}\cdot\mathcal{OP}_{n}=\mathcal{OP}_{n}\cdot\mathcal{OR}_{n}=\mathcal{OR}_{n}$,
and $\mathcal{OP}_{n}\cap\mathcal{OR}_{n}=\{\alpha\in\mathcal{P}_{n}:\,|\text{im}(\alpha)|\leq2\}$.
It follows that $\mathcal{P}_{n}$ is a semigroup. 

\section{Describing $\mathcal{OP}_{n}$ and $\mathcal{OR}_{n}$ with oriented
triples}

This result was stated but not proved in \cite[Proposition 1.1]{LM2006},
so here it is proved for the first time.
\begin{thm}
\label{thm:triples}Let $\alpha\in\mathcal{T}_{n}$. Then $\alpha\in\mathcal{OP}_{n}$
(resp. $\alpha\in\mathcal{OR}_{n})$ if and only if for every triple
$S=(i,j,k)$ of members of $[n]$, $S\alpha$ has the same (resp.
the opposite) orientation as $S$.
\end{thm}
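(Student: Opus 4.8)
The plan is to prove the two implications separately; the forward one is a quick consequence of Lemma~\ref{lem:OP-via-oriented}, while essentially all the work lies in the converse. For the forward implication, note first that every triple $S=(i,j,k)$ is oriented: it is uniquely oriented when its three entries are distinct (since then $|S|=3$), and it is both cyclic and anti-cyclic when $|S|\le 2$. So let $\alpha\in\mathcal{OP}_n$ and let $S$ be a triple. If $|S\alpha|\ge 3$, then Lemma~\ref{lem:OP-via-oriented} gives $S\alpha\sim S$, which is precisely ``$S\alpha$ has the same orientation as $S$''; and if $|S\alpha|\le 2$, then $S\alpha$ is simultaneously cyclic and anti-cyclic, so it still carries whichever orientation $S$ carries. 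The case $\alpha\in\mathcal{OR}_n$ is identical with $S^{R}$ in place of $S$.

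For the converse I would argue by contraposition and reduce to the following combinatorial claim: \emph{if a sequence $b=(b_0,\dots,b_{n-1})$ over $[n]$ has at least three distinct entries and is not cyclic, then some subsequence $(b_i,b_j,b_k)$ with $i<j<k$ is not cyclic}; being a non-cyclic sequence of length three, such a subsequence automatically has three distinct entries and so is uniquely anti-cyclic. Granting the claim: if $\alpha\notin\mathcal{OP}_n$ then $(0\alpha,\dots,(n-1)\alpha)$ is not cyclic; after treating the case $|\text{im}(\alpha)|\le 2$ separately, we may assume it has at least three distinct entries, so the claim produces $i<j<k$ with $(i\alpha,j\alpha,k\alpha)$ uniquely anti-cyclic, while $(i,j,k)$ is uniquely cyclic. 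Thus $\alpha$ reverses the orientation of the triple $(i,j,k)$ and the stated condition fails. The $\mathcal{OR}_n$ statement then follows by pre-composing $\alpha$ with the order-reversing permutation of $[n]$ — which reverses the orientation of every triple — and applying the $\mathcal{OP}_n$ case.

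To prove the combinatorial claim I would first normalise $b$. Cyclicity of $b$, and the property ``every three entries of $b$, taken in the cyclic order of their positions, form a cyclic triple'', are both invariant under replacing $b$ by a cyclic variant; and since any three positions listed in cyclic order are a cyclic rotation of those positions listed increasingly, and cyclicity of a length-three sequence is unchanged by rotation, a non-cyclic triple read in cyclic order re-orders to a non-cyclic index-ordered subsequence. Hence I may assume that $b_0$ equals the maximum entry $M$ and, taking the rotation point at the start of a maximal run of $M$'s, that $b_{n-1}<M$; then $b$ has no descent across position $n-1$, so a non-cyclic $b$ has two descents at positions $0\le d<e\le n-2$, that is, $b_d>b_{d+1}$ and $b_e>b_{e+1}$. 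A finite case analysis on the relative order of $b_d,b_{d+1},b_e,b_{e+1}$ now yields the triple: if $b_e<M$ take $(b_d,b_e,b_{e+1})$, which is strictly decreasing; if $b_e=M$ take $(b_{d+1},b_e,b_{e+1})$ or $(b_d,b_{d+1},b_{e+1})$ according to whether or not $b_{e+1}>b_{d+1}$; and in the single remaining sub-case, where these four positions together carry only two distinct values, use a further position carrying a third value (which must occur) together with positions $d$, $d+1$ and one of $e$, $e+1$, the location of that extra position dictating the choice.

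The place I expect the argument to be most delicate is this last case analysis, especially its degenerate branches where not all four designated values are distinct; the normalisation $b_0=M$, $b_{n-1}<M$ is precisely what keeps it finite and manageable, by removing the wrap-around descent and reducing everything to two genuine internal descents. The forward implication, the rotation-invariance observations, and the reduction of the converse to the combinatorial claim should all be routine by comparison.
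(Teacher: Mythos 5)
Your forward direction and the reduction of the $\mathcal{OR}_n$ case to the $\mathcal{OP}_n$ case via the order-reversing permutation are fine and match the paper. For the converse your strategy is essentially the paper's as well --- argue the contrapositive, locate two descents of the image sequence, and extract a non-cyclic triple by cases on the four values at the descent endpoints, reaching for a third image value in the degenerate branch --- except that you first rotate so that $b_0=M$ and $b_{n-1}<M$, which linearises the cyclic bookkeeping the paper does by hand. That normalisation is a genuine convenience, but your case analysis is stated for an \emph{arbitrary} pair of descents $d<e$, and for an arbitrary pair it fails. The branch ``$b_e<M$: take $(b_d,b_e,b_{e+1})$, which is strictly decreasing'' needs $b_d>b_e$: for $b=(4,2,1,3,0)$ with $d=1$, $e=3$ the recipe returns $(2,3,0)$, which is cyclic. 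Similarly, when $b_d<M=b_e$ and $b_{d+1}=b_{e+1}$ the four endpoint values can be three in number while both of your named triples in the $b_e=M$ branch are cyclic, so your ``single remaining sub-case'' does not catch everything. The repair is to take $d$ to be the \emph{first} descent: the normalisation then forces $M=b_0\le b_1\le\cdots\le b_d$, hence $b_d=M$, and with that every branch closes exactly as you describe. You must say this; without it the claims you make about the chosen triples are false.

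The second problem is that the case $|\text{im}(\alpha)|\le 2$ cannot be ``treated separately,'' because there the contrapositive is unprovable. For $n=4$ the parity map $x\mapsto x\bmod 2$ sends $(0,1,2,3)$ to $(0,1,0,1)$, which is not cyclic, so this $\alpha\notin\mathcal{OP}_4$; yet every triple has an image with at most two distinct entries and is therefore both cyclic and anti-cyclic, so no triple witnesses the failure. The equivalence thus genuinely requires $|\text{im}(\alpha)|\ge3$ --- the paper's own Case 3 silently assumes this without justification, so the defect is inherited from the statement rather than introduced by your method --- but your plan papers over it: you should either add the hypothesis explicitly or record such maps as exceptions, rather than promising a separate treatment that does not exist.
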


\begin{proof}
($\Rightarrow)$ Follows from inheritance of orientation by subsequences.

($\Leftarrow)$ First, assume $\alpha\not\in\mathcal{OP}_{n}$. We
find a cyclic triple $(r,s,t)$ such that $(r\alpha,s\alpha,t\alpha)$
is not cyclic. Since $\alpha$ is not orientation-preserving there
exist distinct integers $i,j$ such that $i\alpha>(i+1)\alpha$ and
$j\alpha>(j+1)\alpha$.

\uline{Case 1}: $i\alpha\neq j\alpha$. Without loss we assume
that $i\alpha>j\alpha$. Put $r=i,s=j,t=j+1$. Then $(r,s,t)$ is
cyclic but since $i\alpha>j\alpha>(j+1)\alpha$, $(r\alpha,s\alpha,t\alpha)$
is anti-cyclic (and not cyclic as the three entries are pairwise distinct).

\uline{Case 2}: $(i+1)\alpha\neq(j+1)\alpha.$ Without loss we
assume that $(i+1)\alpha>(j+1)\alpha$. Put $r=i,s=i+1,t=j+1$. Then
$(r,s,t)$ is cyclic but since $i\alpha>(i+1)\alpha>(j+1)\alpha$,
$(r\alpha,s\alpha,t\alpha)$ is anti-cyclic.

\uline{Case 3}: $i\alpha=j\alpha$ and $(i+1)\alpha=(j+1)\alpha$.
Since $|\text{im\ensuremath{\alpha}\ensuremath{|\geq3} }$there exists
$k$ such that $k\alpha\not\in\{i\alpha,(i+1)\alpha\}$. It follows
that the members of the list $i,i+1,k,j,j+1$ are pairwise distinct
and, by interchanging the symbols $i$ and $j$ if necessary, we may
assume that $(i,i+1,k,j,j+1)$ is cyclic. 

(1) Suppose that $k\alpha>i\alpha.$ Put $r=k,s=i,t=i+1$. Then $(r,s,t)$
is cyclic but since $k\alpha>i\alpha>(i+1)\alpha$, $(r\alpha,s\alpha,t\alpha)$
is anti-cyclic.

(2) Suppose $i\alpha>k\alpha>(i+1)\alpha$. Put $r=i,s=k,t=j+1$.
Then $(r,s,t)$ is cyclic but $(r\alpha,s\alpha,t\alpha)$ is anti-cyclic.

(3) Suppose $(i+1)\alpha>k\alpha$. Put $r=i,s=i+1,t=k$. Then $(r,s,t)$
is cyclic but since $i\alpha>(i+1)\alpha>k\alpha$, $(r\alpha,s\alpha,t\alpha)$
is anti-cyclic. 

This completes a proof of the reverse implication in the case where
$\alpha\not\in\mathcal{OP}_{n}$. For the case where $\alpha\not\in\mathcal{OR}_{n}$
let $\gamma$ be the permutation of order reversal: $i\gamma=n-1-i$
$(0\leq i\leq n-1)$. Then $\gamma\in\mathcal{OR}_{n}$ so that $\alpha\gamma\not\in\mathcal{OP}_{n}$
as otherwise $\alpha\gamma^{2}=\alpha\in\mathcal{OR}_{n}$. We then
apply the previous argument to conclude that there exists three pairwise
distinct integers forming a cyclic triple $(r,s,t)$ such that $(r\alpha\gamma,s\alpha\gamma,t\alpha\gamma)$
is anti-cyclic. But then $(r\alpha\gamma^{2},s\alpha\gamma^{2},t\alpha\gamma^{2})=(r\alpha,s\alpha,t\alpha)$
is cyclic, and noting this completes the proof of (b). 
\end{proof}

\section{Describing $\mathcal{P}_{n}$ with oriented quadruples and with chord
intersections}

We introduce a new characterization of $\mathcal{P}_{n}$ in terms
of the preservation of a single geometric property. This stems from
the study of \emph{Gauss diagrams} in knot theory, which consist of
a circle and chords. The paper \cite{D1936} first described Gauss
diagrams satisfying an important property known as being \emph{realizable},
and now we know that one can describe realizable Gauss diagrams by
only using the information stating, for every pair of chords, whether
or not they intersect \cite{STZ2009}. 
\begin{defn}
Consider the integers $0,1,\cdots,n-1$ positioned clockwise around
a circle. A mapping $\alpha\in\mathcal{T}_{n}$ has the \emph{chord
property} if whenever the chords $ac$ and $bd$ intersect $(a,b,c,d\in[n])$
then so do the chords $a\alpha c\alpha$ and $b\alpha d\alpha$. (See
Figure \ref{fig:chords}.)
\end{defn}

\begin{figure}
\includegraphics[scale=0.5]{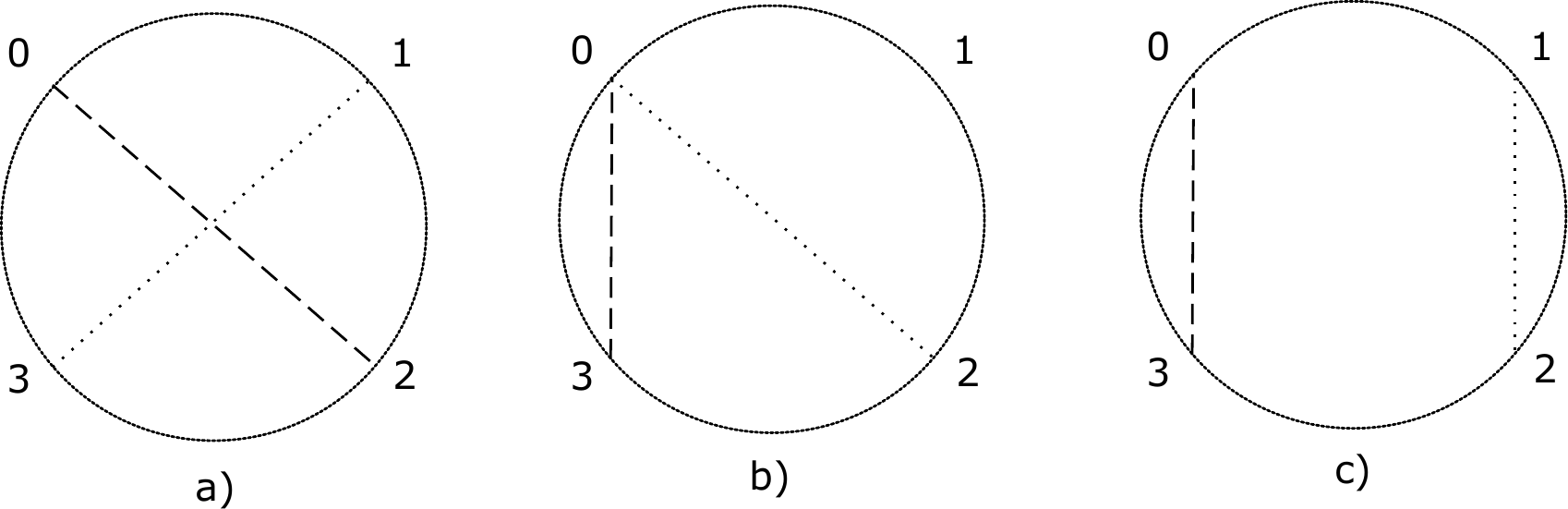}

\caption{\label{fig:chords}Diagram a) shows two intersecting chords $13$
and $02$ as a dotted line and dashed line. If we apply a mapping
$0\protect\mapsto0$, $1\protect\mapsto0$, $2\protect\mapsto3$,
$3\protect\mapsto2$, the rearranged chords still intersect, see diagram
b). If, instead, we apply a mapping $0\protect\mapsto0$, $1\protect\mapsto1$,
$2\protect\mapsto3$, $3\protect\mapsto2$, the rearranged chords
do not intersect, see diagram c).}
\end{figure}

This definition makes allowance for one point chords. If there is
repetition among the $a,b,c,d$ due to the chords having a common
endpoint, then chord intersection is preserved by any mapping $\alpha$
by virtue of the image of that common endpoint. We shall denote the
clockwise arc of the circle $R$ that runs from $a$ to $c$ as $\overrightarrow{ac}$. 
\begin{lem}
\textbf{\label{lem:chords}}Let $a,b,c,d\in[n]$ be four points on
the circumference of a circle on which the members of $[n]$ are placed
clockwise. The chords $ac$ and $bd$ meet if and only if the quadruple
$S=(a,b,c,d)$ is oriented. 
\end{lem}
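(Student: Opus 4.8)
The plan is to reduce the geometric statement about chords to a combinatorial statement about the cyclic order of the four points, and then to check the combinatorics by cases. First I would dispose of the degenerate situations: if two of the points $a,b,c,d$ coincide, then either the chords share an endpoint (so they "meet" in the weak sense adopted here) or one of them is a one-point chord lying on the other, and in all such cases one checks directly that $S=(a,b,c,d)$ is oriented (indeed $|S|\le 3$, and a short inspection shows such an $S$ is always cyclic or anti-cyclic when the repetition is of this form). So the substance is the case where $a,b,c,d$ are pairwise distinct.

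For distinct points, the key observation is that, since the points of $[n]$ sit clockwise on the circle, the chords $ac$ and $bd$ intersect (in their interiors) if and only if exactly one of $b,d$ lies on the open clockwise arc $\overrightarrow{ac}$ and the other lies on the open arc $\overrightarrow{ca}$ — that is, the pairs $\{a,c\}$ and $\{b,d\}$ "interleave" around the circle. The remaining task is to show that this interleaving condition is equivalent to $(a,b,c,d)$ being oriented. I would argue this by noting that, up to a cyclic variant and up to reversal, the cyclic pattern of the four distinct values around the circle is one of only three essentially different configurations: the clockwise order is $a,b,c,d$, or $a,b,d,c$, or $a,c,b,d$. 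In the first of these $\{a,c\}$ and $\{b,d\}$ interleave; in the latter two they do not. So I would show: (i) if the clockwise order is a cyclic variant of $(a,b,c,d)$ or its reverse, then $S$ is oriented (it is literally a cyclic variant of an increasing or decreasing run, after rotating the circle labels) and the chords meet; (ii) in the other two orderings $S$ fails to be cyclic (two descents, using distinctness) and fails to be anti-cyclic (two ascents), so $S$ is not oriented, and the chords do not meet. Matching these two trichotomies termwise gives the equivalence.

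The step I expect to need the most care is verifying, in case (ii), that $S=(a,b,c,d)$ is genuinely \emph{not} oriented — one must confirm that in the orderings "$a,b,d,c$ clockwise" and "$a,c,b,d$ clockwise" the sequence $(a,b,c,d)$ has at least two indices $i$ with $a_i>a_{i+1}$ and at least two with $a_i<a_{i+1}$ (indices mod $4$), which is where pairwise distinctness of the four values is used; and symmetrically that in case (i) it really is oriented. A clean way to package all of this is to WLOG rotate the circular labelling so that $a$ is the smallest of the four distinct values (this does not change which chords meet, nor the orientation type of $S$, since rotating the labels $0,\dots,n-1$ cyclically is realised by an element of $\mathcal{OP}_n$ and orientation type is preserved by such maps), reducing to just the three ordered patterns above with $a$ in first position. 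The lemma then follows by inspecting these three cases directly.
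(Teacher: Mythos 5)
Your treatment of the non-degenerate case (four pairwise distinct points) is sound and is in substance the paper's own argument: both proofs reduce ``the chords $ac$ and $bd$ meet'' to the interleaving condition that one of $b,d$ lies on $\overrightarrow{ac}$ and the other on $\overrightarrow{ca}$, and then translate that into orientation of $(a,b,c,d)$. The organisational difference is that the paper proves the two implications separately and constructively --- interleaving gives either $(a,b,c,d)$ cyclic or $(a,d,c,b)$ cyclic, the latter meaning $(a,b,c,d)$ is anti-cyclic, and conversely --- so it never has to certify that any quadruple is \emph{not} oriented; your exhaustive trichotomy of the cyclic arrangements does require that extra verification (two ascents and two descents in each non-interleaving pattern), but your normalisation, rotating the labels so that $a$ is the minimum of the four values, makes those checks immediate. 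Two small points there: the reduction ``up to reversal'' to three patterns needs a one-line remark that reflecting the circle preserves the property of being oriented, and the rotation step needs both directions of ``orientation type is unchanged,'' which you get because the rotation and its inverse both lie in $\mathcal{OP}_{n}$ (or by a direct count of descents).

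The genuine flaw is in your disposal of the degenerate cases. From ``two of $a,b,c,d$ coincide'' you infer that either the chords share an endpoint or one is a one-point chord lying on the other, and that in all such cases $S$ is oriented. That case split is not exhaustive and the conclusion is not always true: take $a=c$ with $a,b,d$ pairwise distinct. Then $ac$ is the one-point chord $\{a\}$, which does not meet the chord $bd$ (the only points of the circle on $bd$ are $b$ and $d$ themselves), and $S=(a,b,a,d)$ is \emph{not} oriented --- it has exactly two ascents and two descents whatever the relative order of $a,b,d$ (e.g.\ $(0,1,0,2)$). The equivalence of the lemma still holds here, but with both sides false; your sketch as written would certify such an $S$ as oriented and hence, via the lemma, the chords as meeting, which is wrong. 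You need to separate the repeated-entry patterns in which the chords do meet (a cyclically adjacent repetition such as $a=b$, or $a=c\in\{b,d\}$, or $b=d\in\{a,c\}$), where $|S|\leq3$ and $S$ collapses to a triple and is therefore oriented, from the patterns $a=c\notin\{b,d\}$ and $b=d\notin\{a,c\}$ with three distinct values, where the chords do not meet and $S$ must be checked to be non-oriented. This is precisely the bookkeeping behind the paper's sentence ``If $a=c$, then since $ac$ meets $bd$, it follows that $a=b=c$ or $a=d=c$.''
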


\begin{proof}
Suppose the chords $ac$ and $bd$ meet. Suppose that our four points
are distinct. If $b$ lies on the arc $\overrightarrow{ac}$ then
$d$ lies on $\overrightarrow{ca}$ and $(a,b,c,d)$ is oriented clockwise
on the circle, whence $(a,b,c,d)$ is cyclic. Alternatively $b$ lies
on the arc $\overrightarrow{ca}$, and $d$ does not, in which case
$d$ lies on $\overrightarrow{ac}$ and $(a,d,c,b)$ is cyclic. But
then $(a,d,c,b)^{R}=(b,c,d,a)\sim(a,b,c,d)$ is anti-cyclic. In either
event, $S$ is oriented.

On the other hand, suppose that $S$ has a repeated entry. If $a=c$,
then since $ac$ meets $bd$, it follows that $a=b=c$ or $a=d=c$,
in which case $S$ is oriented; the same conclusion follows if $b=d$.
Otherwise two cyclically adjacent entries in $S$, for instance $a$
and $b$, are equal, in which case it also follows that $S$ is oriented. 

Conversely, if $(a,b,c,d)$ is cyclic, then $b$ lies on $\overrightarrow{ac}$
and $d$ lies on $\overrightarrow{ca}$ so that the chords $ac$ and
$bd$ meet. On the other hand if $(a,b,c,d)$ is anti-cyclic then
$(a,b,c,d)^{R}=(d,c,b,a)$ is cyclic and so the chord $db=bd$ meets
the chord $ca=ac$, as required. 
\end{proof}
\begin{prop}
\textbf{\label{prop:chords}}A mapping $\alpha\in\mathcal{T}_{n}$
has the chord property if and only if for every oriented quadruple
$S=(a,b,c,d$), the image sequence $S\alpha$ is also oriented. 
\end{prop}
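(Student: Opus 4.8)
The plan is to reduce Proposition \ref{prop:chords} to Lemma \ref{lem:chords}, which already translates "the chords $ac$ and $bd$ meet" into "the quadruple $(a,b,c,d)$ is oriented." The subtlety is that the chord property is stated for all quadruples $(a,b,c,d)$ — with chords $ac$ and $bd$ — whereas the proposition quantifies over \emph{oriented} quadruples. Lemma \ref{lem:chords} bridges exactly this: a quadruple fails to be oriented precisely when its chords do not meet. So the two conditions should match up after one checks the bookkeeping about how $S\alpha$'s chords relate to $S$'s chords.

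First I would prove the forward direction. Assume $\alpha$ has the chord property and let $S=(a,b,c,d)$ be an oriented quadruple. By Lemma \ref{lem:chords} the chords $ac$ and $bd$ meet. By the chord property the chords $a\alpha c\alpha$ and $b\alpha d\alpha$ meet. Applying Lemma \ref{lem:chords} in the other direction to the quadruple $S\alpha=(a\alpha,b\alpha,c\alpha,d\alpha)$ — whose associated chords are precisely $a\alpha c\alpha$ and $b\alpha d\alpha$ — we conclude $S\alpha$ is oriented. This direction is essentially immediate.

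For the converse, suppose that $\alpha$ maps every oriented quadruple to an oriented quadruple, and suppose the chords $ac$ and $bd$ meet for some $a,b,c,d\in[n]$. By Lemma \ref{lem:chords}, $S=(a,b,c,d)$ is oriented, hence $S\alpha=(a\alpha,b\alpha,c\alpha,d\alpha)$ is oriented, and one more application of Lemma \ref{lem:chords} shows the chords $a\alpha c\alpha$ and $b\alpha d\alpha$ meet. Thus $\alpha$ has the chord property. The only point requiring a word of care — and the step I expect to be the main (mild) obstacle — is the correspondence between the data "chords $ac$ and $bd$" and the data "quadruple $(a,b,c,d)$": one must check that the convention pairing the $1$st/$3$rd entries into one chord and the $2$nd/$4$th entries into the other is respected under $\alpha$, and that the degenerate cases (repeated entries, one-point chords) are handled — but these are exactly the cases already covered inside the proof of Lemma \ref{lem:chords}, so nothing new arises. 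Hence the proof is a direct double application of Lemma \ref{lem:chords}.
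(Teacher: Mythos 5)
Your proposal is correct and follows essentially the same route as the paper: both directions are obtained by a double application of Lemma \ref{lem:chords}, first translating chord intersection into orientedness of the quadruple and then back again for the image. No further comment is needed.
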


\begin{proof}
Suppose that $\alpha$ has the chord property and let $S=(a,b,c,d$)
be an oriented sequence. Let $S\alpha=(A,B,C,D)$. By Lemma \ref{lem:chords},
the chords $ac$ and $bd$ intersect, and hence by hypothesis so do
the chords $AC$ and $BD$. Again by Lemma \ref{lem:chords} it follows
that $S\alpha$ is oriented. 

Conversely, suppose that the image of every oriented quadruple under
$\alpha$ is oriented and let chords $ac$ and $bd$ meet. By Lemma
\ref{lem:chords} $(a,b,c,d)$ is oriented and so by hypothesis so
is $(A,B,C,D)$. Again by Lemma \ref{lem:chords}, it follows that
chords $AC$ and $BD$ meet, whence we conclude that $\alpha$ has
the chord property. 
\end{proof}
\begin{thm}
(a) $\alpha\in\mathcal{P}_{n}$ if and only if for every oriented
quadruple $S=(a,b,c,d)$, $S\alpha$ is also oriented. 

(b) $\alpha$ has the chord property if and only if $\alpha\in\mathcal{P}_{n}$. 
\end{thm}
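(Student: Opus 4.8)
The plan is to deduce part (b) from part (a) together with Proposition~\ref{prop:chords}, and to prove part (a) by combining Theorem~\ref{thm:triples} with a reduction from quadruples to triples. Since Proposition~\ref{prop:chords} already states that $\alpha$ has the chord property if and only if every oriented quadruple is mapped to an oriented quadruple, part (b) follows immediately once part (a) is established; so the entire content is part (a).

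For part (a), the forward direction is easy: if $\alpha\in\mathcal{P}_{n}=\mathcal{OP}_{n}\cup\mathcal{OR}_{n}$ and $S=(a,b,c,d)$ is oriented, then since orientation is inherited by subsequences and $\alpha$ preserves (or reverses) the orientation of the whole cycle $(0,1,\dots,n-1)$, Lemma~\ref{lem:OP-via-oriented} gives $S\alpha\sim S$ or $S\alpha\sim S^{R}$ when $|S\alpha|\ge 3$; in either case $S\alpha$ is oriented, and when $|S\alpha|\le 2$ it may fail to be uniquely oriented but is still trivially oriented in the degenerate sense, so one checks that case separately (it is automatic since a length-four sequence on at most two values is always oriented). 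The converse is the substantive direction: assume every oriented quadruple maps to an oriented quadruple, and show $\alpha\in\mathcal{P}_{n}$. The strategy is to show that this hypothesis forces every oriented \emph{triple} to map to an oriented triple, and then apply Theorem~\ref{thm:triples}. Given a triple $(i,j,k)$, I would pad it to a quadruple — for instance $(i,j,k,k)$ or, more usefully, insert a fourth coordinate $\ell$ chosen so that $(i,j,k,\ell)$ is oriented and $\ell\alpha$ lies in a controlled position relative to $i\alpha,j\alpha,k\alpha$; then orientedness of the image quadruple will pin down orientedness of the image triple $(i\alpha,j\alpha,k\alpha)$.

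The main obstacle — and the point requiring care — is that Theorem~\ref{thm:triples} characterizes membership in $\mathcal{OP}_{n}$ (resp.\ $\mathcal{OR}_{n}$) by \emph{consistently} preserving (resp.\ consistently reversing) the orientation of \emph{all} triples, whereas the quadruple hypothesis only tells us each oriented triple goes to \emph{some} oriented triple, with no a priori guarantee that the ``preserve'' choice is made uniformly. So the real work is to rule out a mixed behaviour in which $\alpha$ preserves the orientation of some triples and reverses that of others while still sending every oriented quadruple to an oriented quadruple. I would handle this by a connectivity/consistency argument: suppose for contradiction that there are a cyclic triple $T_1$ with $T_1\alpha$ cyclic and a cyclic triple $T_2$ with $T_2\alpha$ anti-cyclic (and uniquely so, which one arranges by choosing the underlying points with distinct images — using $|\mathrm{im}\,\alpha|\ge 3$, since the case $|\mathrm{im}\,\alpha|\le 2$ puts $\alpha$ in $\mathcal{OP}_n\cap\mathcal{OR}_n\subseteq\mathcal{P}_n$ trivially). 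One can then find two such triples sharing two common points, say $T_1=(a,b,c)$ and $T_2=(a,b,d)$ with all of $a,b,c,d$ having pairwise distinct images; the sequences $(a,b,c)$ and $(a,b,d)$ being cyclic of opposite image-orientation is exactly what forces one of the quadruples $(a,b,c,d)$, $(a,c,b,d)$, $(a,b,d,c)$ (suitably chosen to be oriented) to have a non-oriented image, contradicting the hypothesis. Reducing the general mixed-behaviour situation to this ``two shared points'' configuration is the crux; it follows by a short case analysis on how the point sets of $T_1$ and $T_2$ overlap, passing through intermediate triples to transport the orientation behaviour, much in the spirit of the case analysis already carried out in the proof of Theorem~\ref{thm:triples}. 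Once consistency is secured, Theorem~\ref{thm:triples} delivers $\alpha\in\mathcal{OP}_n$ or $\alpha\in\mathcal{OR}_n$, completing part (a), and part (b) follows from Proposition~\ref{prop:chords}.
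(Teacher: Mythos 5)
Your route is genuinely different from the paper's. For the hard direction of (a) the paper does not pass through Theorem \ref{thm:triples} at all: assuming $\alpha\notin\mathcal{P}_{n}$, it directly manufactures a single cyclic quadruple with non-oriented image, by taking $i$ with $i\alpha=\min(\mathrm{im}(\alpha))$ followed by an ascent and $i'$ with $i'\alpha=\max(\mathrm{im}(\alpha))$ followed by a descent, locating the first subsequent descent (resp.\ ascent) after each, and then checking two cases on the resulting inequalities to produce a quadruple whose image goes up--down--up--down and so cannot be oriented. Your plan instead reduces quadruples to triples and invokes Theorem \ref{thm:triples}, with all the work concentrated in ruling out mixed behaviour. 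That reduction is viable, and your two-shared-points step is correct: if $(a,b,c)$ and $(a,b,d)$ are cyclic with images uniquely cyclic and uniquely anti-cyclic respectively, then the cyclic arrangement of $\{a,b,c,d\}$ is an oriented quadruple whose image would have to agree with both sub-triples and hence cannot be oriented.

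Two points need attention. First, a side-claim in your easy direction is false: a length-four sequence on two values need not be oriented --- $(0,1,0,1)$ is the paper's own counterexample. The conclusion survives because the forward implication is pure inheritance of orientation by subsequences (which is all the paper says), with no case split on $|S\alpha|$ needed; but your stated justification for the degenerate case is wrong. Second, and more seriously, the step you yourself call the crux --- transporting opposite orientation-behaviour between two arbitrary triples until they share two points --- is asserted rather than proved, and the obstacle is precisely that intermediate triples whose images have fewer than three distinct values are simultaneously cyclic and anti-cyclic and so transmit no information. The argument can be completed: colour each pair $\{u,v\}$ with $u\alpha\ne v\alpha$ by the orientation behaviour of any triple $\{u,v,w\}$ with $w\alpha\notin\{u\alpha,v\alpha\}$ (well defined by your two-shared-points step, and such $w$ exists since $|\mathrm{im}(\alpha)|\ge3$), then show two pairs sharing a point receive the same colour by passing through a common witness, and that any two pairs are linked by a pair meeting both. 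Until something like this is written out, your proof has a genuine gap exactly where the difficulty lies; the paper's direct construction sidesteps the issue entirely.
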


\begin{proof}
(b) follows from Proposition \ref{prop:chords} together with (a).

(a)($\Rightarrow)$ Follows from inheritance of orientation by subsequences.

(a)($\Leftarrow$) We again argue the contrapositive, like in Theorem
\ref{thm:triples}, so suppose that $\alpha\not\in\mathcal{P}_{n}$.
Let $m=$ min$(\text{im\ensuremath{(\alpha))}}.$ Choose $i$ such
that $i\alpha=m$ but $i\alpha<(i+1)\alpha$. Consider the cyclic
sequence $I:i+1,i+2,\cdots,i+n-2$ and let $j$ be the first listed
member of $I$ such that $j\alpha>(j+1)\alpha$; since $\alpha\not\in\mathcal{OP}_{n}$,
$j$ occurs in $I$, as there are at least $2$ integers $t$ such
that $t\alpha>(t+1)\alpha$. Similarly let $M=$ max$(\text{im\ensuremath{(\alpha))}}.$
Choose $i'$ such that $i'\alpha=M$ but $i'\alpha>(i'+1)\alpha$.
Consider the cyclic sequence $I':i'+1,i'+2,\cdots,i'+n-2$ and let
$j'$ be the first listed member of $I'$ such that $j'\alpha<(j'+1)\alpha$;
since $\alpha\not\in\mathcal{OR}_{n}$, $j'$ occurs in $I'$ as there
are at least $2$ integers $t$ such that $t\alpha<(t+1)\alpha$.
Note that $(i+1)\alpha\leq j\alpha$ and $(i'+1)\alpha\geq j'\alpha.$

\uline{Case 1}: $(i+1)\alpha=j\alpha$ or $(i'+1)\alpha=j'\alpha$.
First suppose $(i+1)\alpha=j\alpha$, whence by definition of $j$
and the assumption that $(i+1)\alpha=j\alpha$ it follows that

\begin{equation}
m=i\alpha<(i+1)\alpha=\cdots=j\alpha>(j+1)\alpha.\label{eq:inequality1}
\end{equation}
Consider the subsequence $J$ of $I$ given by $J:j+1,j+2,\cdots,i+n-2$.
Since $\alpha\not\in\mathcal{OR}_{n}$, there exists $k$ in $J$
such that $k\alpha<(k+1)\alpha$, (for otherwise the first such $k$
is $k=i+n-1$, but then $k+1=i$, contradicting $(k+1)\alpha>k\alpha\geq i\alpha$).
By choosing the first listed such $k$ in $J$ we have by (\ref{eq:inequality1})
that $j\alpha>k\alpha$ and so
\begin{equation}
m=i\alpha<(i+1)\alpha=j\alpha>k\alpha<(k+1)\alpha>i\alpha=m.\label{eq:inequality2}
\end{equation}
From (\ref{eq:inequality2}) we infer that $S=(i,i+1,k,k+1)$ is a
cyclic quadruple of pairwise distinct integers but, by (\ref{eq:inequality2}),
$S\alpha$ is not oriented. 

Similarly if $(i'+1)\alpha=j'\alpha$ the dual argument obtained by
reversing all inequalities and interchanging $\mathcal{OP}_{n}$ and
$\mathcal{OR}_{n}$ throughout yields a cyclic sequence $S'=(i',i'+1,k',k'+1)$
such that $S'\alpha$ is not oriented as 
\[
M=i'\alpha>(i'+1)\alpha=j'\alpha<k'\alpha>(k'+1)\alpha<i'\alpha=M.
\]

\uline{Case 2}: $(i+1)\alpha<j\alpha$ and $(i'+1)\alpha>j'\alpha$.
From the given inequalities we now have 
\[
m=i\alpha<(i+1)\alpha<M=i'\alpha>(i'+1)\alpha>m=i\alpha,
\]
which implies that $T=(i,i+1,i',i'+1)$ is a cyclic sequence of pairwise
distinct integers such that $T\alpha=(i\alpha,(i+1)\alpha,i'\alpha,(i'+1)\alpha)$
is not oriented.
\end{proof}

\end{document}